\newtheorem{theorem}{Theorem}
\newtheorem{corollary}[theorem]{Corollary}
\newtheorem{lemma}[theorem]{Lemma}
\newtheorem{proposition}[theorem]{Proposition}
\newcommand{\BDN}{\textbf{BD-}\ensuremath{\mathbf{N}}\xspace}
\begin{document}

\normalfont\sf

\title{Constructive aspects of Riemann's permutation theorem for series}
\author{J. Berger, D. Bridges, H. Diener, H. Schwichtenberg}
\maketitle

\begin{abstract}%
\noindent
The notions of permutable and weak-permutable convergence of a series
$\sum_{n=1}^{\infty}a_{n}$ of real numbers are introduced.
Classically, these two notions are equivalent, and, by Riemann's two main
theorems on the convergence of series, a convergent series is permutably
convergent if and only if it is absolutely convergent. Working within
Bishop-style constructive mathematics, we prove that Ishihara's principle \BDN implies that every permutably convergent series is absolutely convergent. Since there are models of constructive mathematics in which the Riemann permutation theorem for series holds but \BDN does not, the best we can hope for as a
partial converse to our first theorem is that the absolute convergence of
series with a permutability property classically equivalent to that of Riemann
implies \BDN. We show that this is the case when
the property is weak-permutable convergence.

\end{abstract}

\bigskip \bigskip

\section{Introduction}

This paper follows on from \cite{BergB}, in which the first two authors gave
proofs, within the framework of Bishop-style constructive analysis\emph{
\textbf{(BISH)}},\footnote{That is, analysis using intuitionistic logic, a related set theory such as that of Aczel and Rathjen \cite{Aczel}, and dependent choice. For more on \textbf{BISH}, see \cite{Bishop,BB,BV}.} of the two famous series theorems of
Riemann \cite{Riemann}:\footnote{We use shorthand like $\sum a_{n}$ and $\sum
a_{\sigma(n)}$ for series when it is clear what the index of summation is.}

\begin{description}
\item[\textbf{RST}$_{1}$] \emph{If a series }$\sum a_{n}$\emph{\ of real
numbers is absolutely convergent, then for each permutation }$\sigma$ \emph{of
the set }$\mathbf{N}^{+}$ \emph{of positive integers, the series }$\sum
a_{\sigma(n)}$\emph{\ converges to the same sum as }$\sum a_{n}.$

\item[\textbf{RST}$_{2}$] \emph{If a series }$\sum a_{n}$\emph{\ of real
numbers is conditionally convergent, then for each real number }%
$x$\emph{\ there exists a permutation }$\sigma$\emph{\ of }$\mathbf{N}^{+}%
$\emph{\ such that }$\sum a_{\sigma(n)}$\emph{\ converges to }$x$\emph{.}
\end{description}

\noindent
It is not hard to extend the conclusion of \textbf{RST}$_{2}$ to what we call
its \emph{full, extended version}, which includes the existence of
permutations of the series $\sum a_{n}$ that diverge to $\infty$ and to
$-\infty$. In consequence, a simple reductio ad absurdum argument proves
classically\ that if a real series $\sum a_{n}$ is\emph{ \textbf{permutably
convergent}}---that is, every permutation of $\sum a_{n}$ converges in
$\mathbf{R}$---then it is absolutely convergent. An intuitionistic proof of
this last result was provided by Troelstra (\cite{Tro}, pages 95 ff.), using
Brouwer's continuity principle for choice sequences. That result actually has
one serious intuitionistic application: Spitters (\cite{Spitters}, pages
2101--2) uses it to give an intuitionistic proof of the characterisation of
normal linear functionals on the space of bounded operators on a Hilbert
space; he also asks whether there is a proof of the Riemann-Troelstra result
within \textbf{BISH} alone. In Section 3 below, we give a proof, within
\textbf{BISH} \emph{supplemented }by the constructive-foundationally important
principle \BDN, that permutable convergence implies
absolute convergence. While this proof steps outside unadorned \textbf{BISH},
it is valid in both intuitionistic and constructive recursive mathematics, in
which \BDN is derivable.

This raises the question: over \textbf{BISH}, does the absolute convergence of
every permutably convergent series imply \BDN? Thanks to
Diener and Lubarsky \cite{LubDien}, we now know that in certain formal systems
of \textbf{BISH}, the answer is negative; in other words, the result about
permutably convergent series is weaker than \BDN. In turn,
this raises another question: is there a proposition that is
\emph{classically} equivalent to, and clearly cognate with, the absolute
convergence of permutably convergent series and that, added to \textbf{BISH},
implies \BDN? In order to answer this question
affirmatively, we introduce in Section 2 the notion of \emph{weak-permutable
convergence} and then derive some of its fundamental properties, including its
classical equivalence to permutable convergence. In Section 4 we show that the
absolute convergence of weak-permutably convergent series implies
\BDN. Thus, in \textbf{BISH}, we have the implications

\begin{quotation}
 \ \ \ Every weak-permutably convergent series is absolutely convergent%

\bigskip

$\implies$ \BDN

\bigskip

$\implies$ Every permutably convergent series is absolutely convergent.
\end{quotation}

\noindent
In view of the Diener-Lubarsky results in \cite{LubDien}, neither of these
implications can be reversed.

\section{Weak-permutably convergent series in \textbf{BISH}}

By a\emph{ \textbf{bracketing} }of a real series $\sum a_{n}$ we mean a pair comprising

\begin{itemize}
\item a strictly increasing mapping $f:\mathbf{N}^{+}\rightarrow\mathbf{N}%
^{+}$ with $f(1)=1$, and

\item the sequence $\mathbf{b}$ defined by%
\[
b_{k}\equiv\sum_{i=f(k)}^{f(k+1)-1}a_{i}\ \ \ (k\geqslant1).
\]

\end{itemize}

\noindent
We also refer, loosely, to the series $\sum b_{k}$ as a bracketing of $\sum
a_{n}$.

We say that $\sum a_{n}$ is\emph{ \textbf{weak-permutably convergent} }if it
is convergent and if for each permutation $\sigma$ of $\mathbf{N}^{+}$, there
exists a convergent bracketing of $\sum a_{\sigma(n)}$. Clearly, permutable
convergence implies weak-permutable convergence. As we shall see in this
section, the converse holds classically; later we shall show that it does not
hold constructively. As a first step towards this, we have:

\begin{proposition}
\label{060411d}Let $\sum a_{n}$ be a weak-permutably convergent series of real
numbers, with sum $s$, and let $\sigma$ be a permutation of $\mathbf{N}^{+}$.
Then every convergent bracketing of $\sum a_{\sigma(n)}$ converges to $s$.
\end{proposition}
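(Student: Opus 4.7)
The strategy is a reductio. Suppose, for contradiction, that some convergent bracketing $\sum b_{k}$ of $\sum a_{\sigma(n)}$ has sum $t$ with $t\neq s$, say $|t-s|>3\varepsilon$ for some $\varepsilon>0$. The plan is to manufacture a permutation of $\mathbf{N}^{+}$ whose rearranged series admits no convergent bracketing at all, which will contradict the weak-permutable convergence of $\sum a_{n}$.

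First, rule out absolute convergence of $\sum a_{n}$. If $\sum|a_{n}|$ were convergent, then by Riemann's first series theorem RST$_{1}$, proved constructively in \cite{BergB}, the rearrangement $\sum a_{\sigma(n)}$ would converge to $s$; since the partial sums of the bracketing $\sum b_{k}$ are precisely the subsequence $\bigl(\sum_{i=1}^{f(k+1)-1}a_{\sigma(i)}\bigr)_{k}$ of the partial sums of a convergent series, $\sum b_{k}$ would also converge to $s$, contradicting $t\neq s$. Hence the reductio furnishes the constructive negation of absolute convergence of $\sum a_{n}$.

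Next, invoke the full, extended version of RST$_{2}$ referenced in the introduction. Applied to $\sum a_{n}$---convergent but not absolutely convergent---it produces a permutation $\tau$ of $\mathbf{N}^{+}$ whose partial sums $S_{n}=\sum_{i=1}^{n}a_{\tau(i)}$ diverge to $+\infty$. Any bracketing of $\sum a_{\tau(n)}$ then has bracketed partial sums that form an increasing-index subsequence of $(S_{n})$, and therefore also diverge to $+\infty$. Hence no bracketing of $\sum a_{\tau(n)}$ converges, contradicting the weak-permutable convergence of $\sum a_{n}$; this forces $t=s$.

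The chief obstacle is the constructive translation from ``$\sum a_{n}$ is not absolutely convergent'', obtained via the reductio, to the explicit divergence ``for every $M$ there exists $N$ with $\sum_{n\leqslant N}|a_{n}|>M$'' that is needed to invoke extended RST$_{2}$ in \cite{BergB}. In pure BISH this step is delicate (it normally requires Markov-style reasoning), so one must either check that the formulation of extended RST$_{2}$ in \cite{BergB} is compatible with the weaker negative conclusion delivered by the contrapositive of RST$_{1}$, or else directly build the divergent rearrangement $\tau$ from $\sigma$ and the bracketing $f$ by iteratively exploiting the discrepancies $\sum_{i=1}^{f(k+1)-1}a_{\sigma(i)}-\sum_{i=1}^{f(k+1)-1}a_{i}\to t-s\neq0$ to accumulate unbounded positive partial sums while ensuring every positive integer is eventually enumerated.
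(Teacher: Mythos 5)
Your overall architecture is the same as the paper's: assume a bracketing converges to $t$ with $|t-s|>0$, manufacture a rearrangement of $\sum a_{n}$ that diverges to $+\infty$ and hence (since its bracketed partial sums are a subsequence of its partial sums) admits no convergent bracketing, contradict weak-permutable convergence, and conclude $t=s$ by the tightness of the apartness on $\mathbf{R}$ (a point you should state explicitly, since the reductio only delivers $\neg(t\neq s)$). However, your main line of argument has a genuine constructive gap at exactly the place you flag in your last paragraph, and flagging it is not the same as closing it. The contrapositive of \textbf{RST}$_{1}$ gives only the negative statement $\neg(\sum|a_{n}|$ converges$)$, and in \textbf{BISH} this does not yield the positive divergence (for every $C$ there exists $M$ with $\sum_{n=1}^{M}|a_{n}|>C$) that the full, extended version of \textbf{RST}$_{2}$ requires as input; your first reduction step therefore does no useful work, and the proof as written does not go through.

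The paper closes this gap by carrying out, in detail, the construction you only gesture at in your final sentence. From the quantitative hypothesis $|t-s|>3\varepsilon$ it builds (Lemma \ref{060411a}) a permutation $\tau$ that alternately tracks the partial sums of $\sum a_{n}$ (hence gets below $s+\varepsilon$) and the bracketed partial sums of $\sum a_{\sigma(n)}$ (hence gets above $t-\varepsilon$), producing infinitely many disjoint blocks each with $\left|\sum_{n=f(k_{i})+1}^{f(k_{i+1})}a_{\tau(n)}\right|>\tfrac{1}{3}|s-t|$. Summing these blocks gives the \emph{explicit} divergence of $\sum|a_{n}|$ (Lemma \ref{060411b}), and only then is extended \textbf{RST}$_{2}$ invoked to produce a rearrangement diverging to $+\infty$, which can have no convergent bracketing (Lemmas \ref{060411c} and \ref{101111a}). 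This back-and-forth construction is the actual content of the proposition's proof; without it your argument reduces to the classical reductio sketched after Lemma \ref{101111a} in the paper, which is precisely what does not survive the passage to intuitionistic logic.
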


\noindent
The proof of this proposition will depend on some lemmas.

\begin{lemma}
\label{060411a}Let $\sum a_{n}$ be a convergent series of real numbers, with
sum $s$, and let $\sigma$ be a permutation of $\mathbf{N}^{+}$. If there
exists a bracketing $\left(  f,\mathbf{b}\right)  $ of $\sum a_{\sigma(n)}$
that converges to a sum $t\neq s$, then there exist a permutation $\tau$ of
$\mathbf{N}^{+}$ and a strictly increasing sequence $\left(  k_{i}\right)
_{i\geqslant1}$ of positive integers such
\begin{equation}
\left\vert \sum_{n=f(k_{i})+1}^{f(k_{i+1})}a_{\tau(n)}\right\vert >\frac{1}%
{3}\left\vert s-t\right\vert \label{labc0}%
\end{equation}
for all $i$.
\end{lemma}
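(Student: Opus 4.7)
The plan is to construct a permutation $\tau$ and a strictly increasing sequence $(k_{i})_{i\geqslant 1}$ such that the partial sums $\sum_{n=1}^{f(k_{i})}a_{\tau(n)}$ alternate between being close to $s$ (for odd $i$) and close to $t$ (for even $i$). Fixing $\epsilon$ with $0<\epsilon<\tfrac{1}{3}|s-t|$, once this is arranged the triangle inequality immediately yields
\[
\Bigl|\sum_{n=f(k_{i})+1}^{f(k_{i+1})}a_{\tau(n)}\Bigr|\;\geqslant\;|s-t|-2\epsilon\;>\;\tfrac{1}{3}|s-t|,
\]
which is precisely \eqref{labc0}. So the problem reduces to designing $\tau$ with oscillating partial sums at the bracketing endpoints.

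I build $\tau$ by induction on $i$, maintaining control over the set $U_{i}:=\{\tau(1),\ldots,\tau(f(k_{i}))\}$. At an odd stage I arrange $U_{i}=\{1,\ldots,M_{i}\}\cup Q_{i}$, where $M_{i}$ is chosen larger than any integer used so far (this will force surjectivity of $\tau$) and large enough that $|S_{M_{i}}-s|<\epsilon/2$, and $Q_{i}=\{N_{i}+1,\ldots,N_{i}+r_{i}\}$ is a contiguous tail block with $N_{i}$ so large that $|S_{N_{i}+r_{i}}-S_{N_{i}}|<\epsilon/2$. This gives $\bigl|\sum_{n=1}^{f(k_{i})}a_{\tau(n)}-s\bigr|<\epsilon$. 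At an even stage I instead arrange $U_{i}=A_{m_{i}}\cup Q'_{i}$, where $A_{m_{i}}:=\{\sigma(1),\ldots,\sigma(f(m_{i}+1)-1)\}$ is chosen with $|B_{m_{i}}-t|<\epsilon/2$ and with $m_{i}$ also large enough that $A_{m_{i}}\supseteq U_{i-1}$ (which is possible because $\sigma$ is a permutation and $U_{i-1}$ is finite), while $Q'_{i}$ is again a short contiguous tail block contributing less than $\epsilon/2$ in absolute value; this gives $\bigl|\sum_{n=1}^{f(k_{i})}a_{\tau(n)}-t\bigr|<\epsilon$. In both cases I first pick $k_{i}>k_{i-1}$ so that $f(k_{i})$ is at least the required target image size, and then the tail-block length $r_{i}$ (or $r'_{i}$) is determined by $|U_{i}|=f(k_{i})$.

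The main obstacle is the bookkeeping: at every stage the length of the tail-padding block must be reconciled with the value $f(k_{i})$ while $\tau$ remains a bijection. Injectivity is automatic because each extension enumerates only fresh integers. Surjectivity is forced by the odd stages, where $\{1,\ldots,M_{i}\}\subseteq U_{i}$ and $M_{i}$ grows beyond every integer seen so far. All the quantitative data required---moduli of convergence for $\sum a_{n}$ and for the bracketed series $\sum b_{k}$, the finitely many values of $\sigma^{-1}$ needed to force $A_{m_{i}}\supseteq U_{i-1}$, and the large $N_{i}$ yielding small tail oscillations of $\sum a_{n}$---are effective from the hypotheses of the lemma, so the construction is carried out within \textbf{BISH}.
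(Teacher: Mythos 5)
Your proposal is correct and follows essentially the same strategy as the paper's proof: build $\tau$ so that its partial sums at the marks $f(k_i)$ oscillate between an $\epsilon$-neighbourhood of $s$ (via initial segments of the identity ordering) and an $\epsilon$-neighbourhood of $t$ (via initial segments of the $\sigma$-ordering cut at bracket endpoints), then obtain (\ref{labc0}) from the triangle inequality with $\epsilon<\frac{1}{3}\left\vert s-t\right\vert$. The only difference is the bookkeeping: the paper arranges $\left\{ \tau(1),\ldots,\tau(f(k_i))\right\}$ to equal exactly $\left\{ 1,\ldots,f(k_i)\right\}$ at odd stages and exactly $\left\{ \sigma(1),\ldots,\sigma(f(k_i))\right\}$ at even stages, so the cardinalities match automatically and no padding block is needed, whereas you reconcile cardinalities with a far-out contiguous block of negligible contribution --- both devices work.
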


\begin{proof}
Consider, to illustrate, the case where $s<t$. For convenience, let
$\varepsilon\equiv\frac{1}{3}\left(  t-s\right)  $. Pick $k_{1}$ such that%
\[
\left\vert \sum_{n=j}^{k}a_{n}\right\vert <\frac{\varepsilon}{2}\text{
\ \ }\left(  k>j>f(k_{1})\right)  .
\]
Then $\sum_{n=1}^{f(k_{1})}a_{n}<s+\varepsilon$. Set $\tau(k)\equiv k$ for
$1\leqslant k\leqslant f(k_{1})$. Next pick $k_{2}>k_{1}$ such that

\begin{itemize}
\item $\left\{  1,\ldots,f(k_{1})\right\}  \subset\left\{  \sigma
(n):1\leqslant n\leqslant f(k_{2})\right\}  $ and

\item $\left\vert \sum_{n=f(j)}^{f(k)}a_{\sigma(n)}\right\vert <\varepsilon/2$
whenever $k>j>f(k_{2})$.
\end{itemize}

\noindent
Define $\tau(n)$ for $f(k_{1})<n\leqslant f(k_{2})$ so that%
\[
\left\{  \sigma(n)\mid1\leqslant n\leqslant f(k_{2}),\ \sigma(n)>f(k_{1}%
)\right\}  =\left\{  \tau(f(k_{1})+1),\ldots,\tau(f(k_{2}))\right\}
\]
Note that%
\[
\sum_{n=1}^{f(k_{2})}a_{\tau(n)}=\sum_{n=1}^{f(k_{2})}a_{\sigma(n)}%
>t-\varepsilon.
\]
Next, pick $k_{3}>k_{2}$ such that%
\[
\left\{  \tau(1),\dots,\tau(f(k_{2}))\right\}  \subset\left\{  1,\dots
,f(k_{3})\right\}
\]
Define $\tau(n)$ for $f(k_{2})<n\leqslant f(k_{3})$ so that%
\[
\left\{  n:1\leqslant n\leqslant f(k_{3}),~n>\tau\left(  f(k_{2})\right)
\right\}  =\left\{  \tau(f(k_{2})+1),\dots,\tau(f(k_{3}))\right\}  .
\]
Then%
\[
\sum_{n=1}^{f(k_{3})}a_{\tau(n)}=\sum_{n=1}^{f(k_{3})}a_{n}<s+\varepsilon.
\]
Carrying on in this way, we construct, inductively, a strictly increasing
sequence $\left(  k_{i}\right)  _{i\geqslant1}$ of positive integers, and a
permutation $\tau$ of $\mathbf{N}^{+}$, such that for each $j$,%
\[
\sum_{n=1}^{f(k_{2j-1})}a_{\tau(n)}<s+\varepsilon\text{ \ and \ }\sum
_{n=1}^{f(k_{2j})}a_{\tau(n)}>t-\varepsilon.
\]
When $i\in\mathbf{N}^{+}$ is even, we obtain
\[
\left\vert \sum_{n=f(k_{i})+1}^{f(k_{i+1})}a_{\tau(n)}\right\vert
\geqslant\sum_{n=1}^{f(k_{i})}a_{\tau(n)}-\sum_{n=1}^{f(k_{i+1})}a_{\tau
(n)}>t-s-2\varepsilon>\frac{1}{3}\left(  t-s\right)  .
\]
A similar argument gives (\ref{labc0}) when $i$ is odd.
\end{proof}

\begin{lemma}
\label{060411b}Under the hypotheses of \emph{Lemma \ref{060411a}}, the series
$\sum\left\vert a_{n}\right\vert $ diverges.
\end{lemma}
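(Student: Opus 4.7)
The plan is to feed the previous lemma its input and then just sum up. By Lemma \ref{060411a}, the hypotheses give us a permutation $\tau$ of $\mathbf{N}^{+}$ and a strictly increasing sequence $(k_{i})_{i\geqslant 1}$ satisfying the inequality (\ref{labc0}). The idea is that each of the disjoint index blocks $B_{i}\equiv\{f(k_{i})+1,\ldots,f(k_{i+1})\}$ contributes at least $\tfrac{1}{3}|s-t|$ to the sum of absolute values, and then --- because $\tau$ is a permutation --- this blockwise lower bound transfers to a lower bound on the genuine partial sums of $\sum|a_{n}|$.

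The two steps, in order, would be as follows. First, by the triangle inequality applied to (\ref{labc0}),
\[
\sum_{n=f(k_{i})+1}^{f(k_{i+1})}|a_{\tau(n)}|\;\geqslant\;\left|\sum_{n=f(k_{i})+1}^{f(k_{i+1})}a_{\tau(n)}\right|\;>\;\tfrac{1}{3}|s-t|
\]
for every $i\geqslant 1$. Second, fix $N\in\mathbf{N}^{+}$ and set $M_{N}\equiv\max\{\tau(n):n\in B_{1}\cup\cdots\cup B_{N}\}$. Since the $B_{i}$ are pairwise disjoint and $\tau$ is injective, the sets $\tau(B_{1}),\ldots,\tau(B_{N})$ are pairwise disjoint subsets of $\{1,\ldots,M_{N}\}$, so
\[
\sum_{n=1}^{M_{N}}|a_{n}|\;\geqslant\;\sum_{i=1}^{N}\sum_{n\in B_{i}}|a_{\tau(n)}|\;>\;\tfrac{N}{3}|s-t|.
\]
As $N$ can be chosen to make the right-hand side exceed any prescribed bound, the partial sums of $\sum|a_{n}|$ are unbounded, i.e.\ the series diverges.

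There is essentially no hard step: the content is all packed into Lemma \ref{060411a}, and what remains is a clean bookkeeping argument. The only small point to watch, constructively, is to interpret ``diverges'' in its standard positive sense (the partial sums of a non-negative series exceed every rational bound), and to note that producing the witness $M_{N}$ from $\tau$ and $N$ is unproblematic since only finitely many values of $\tau$ are involved.
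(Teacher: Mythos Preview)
Your proof is correct and follows essentially the same route as the paper's: invoke Lemma~\ref{060411a}, apply the triangle inequality to get a fixed positive lower bound on each block $\sum_{n\in B_i}|a_{\tau(n)}|$, and then use injectivity of $\tau$ to transfer the accumulated lower bound to a genuine partial sum $\sum_{n=1}^{M}|a_n|$. The only cosmetic difference is that the paper sums over the contiguous range $1\leqslant n\leqslant f(k_j)$ before passing through $\tau$, whereas you work directly with the disjoint blocks $B_1,\ldots,B_N$; your bookkeeping is, if anything, slightly cleaner.
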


\begin{proof}
Construct the permutation $\tau$ and the sequence $\left(  k_{i}\right)
_{i\geqslant1}$ as in Lemma \ref{060411a}. Given $C>0$, compute $j$ such that
$\left(  j-1\right)  \left\vert s-t\right\vert >3C$. Then
\[
\sum_{n=1}^{f(k_{j})}\left\vert a_{\tau(n)}\right\vert \geqslant\sum
_{i=1}^{j-1}\left\vert \sum_{n=f(k_{i})+1}^{f(k_{i+1})}a_{\tau(n)}\right\vert
>\frac{j-1}{3}\left\vert s-t\right\vert >C.
\]
Then compute $M$ such that%
\[
\left\{  a_{\tau(1)},\ldots,a_{\tau(f(k_{j}))}\right\}  \subset\left\{
a_{1},\ldots,a_{M}\right\}  .
\]
Then%
\[
\sum_{n=1}^{M}\left\vert a_{n}\right\vert \geqslant\sum_{n=1}^{f(k_{j}%
)}\left\vert a_{\tau(n)}\right\vert >C.
\]
Since $C>0$ is arbitrary, the conclusion follows.
\end{proof}

\begin{lemma}
\label{060411c}Let $\sum a_{n}$ be a convergent series of real numbers, and
$\tau$ a permutation of $\mathbf{N}^{+}$ such that $\sum a_{\tau(n)}$ diverges
to infinity. Then it is impossible that $\sum a_{\tau(n)}$ have a convergent bracketing.
\end{lemma}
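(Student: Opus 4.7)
The plan is a short telescoping argument: a bracketing of $\sum a_{\tau(n)}$ never computes anything more than a subsequence of the partial sums, so if the full partial sums diverge to infinity then so must the bracketed ones, contradicting convergence of the bracketing.

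More precisely, I would write $S_N\equiv\sum_{n=1}^{N}a_{\tau(n)}$ and, given a bracketing $(f,\mathbf{b})$ of $\sum a_{\tau(n)}$, observe by telescoping that the $K$-th partial sum of $\sum b_k$ equals
\[
\sum_{k=1}^{K}b_{k}=\sum_{k=1}^{K}\sum_{n=f(k)}^{f(k+1)-1}a_{\tau(n)}=\sum_{n=f(1)}^{f(K+1)-1}a_{\tau(n)}=S_{f(K+1)-1},
\]
using $f(1)=1$. Since $f:\mathbf{N}^{+}\to\mathbf{N}^{+}$ is strictly increasing, $f(K+1)-1\geqslant K$ for every $K$, so the sequence $\bigl(f(K+1)-1\bigr)_{K\geqslant1}$ is unbounded in $\mathbf{N}^{+}$.

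Now suppose, for contradiction, that the bracketing converges, say to a real number $L$. Then there exists $K_{0}$ with $\sum_{k=1}^{K}b_{k}<L+1$ for all $K\geqslant K_{0}$. On the other hand, by the hypothesis that $\sum a_{\tau(n)}$ diverges to infinity, there exists $N_{0}$ such that $S_{N}>L+1$ for all $N\geqslant N_{0}$. Choose $K\geqslant K_{0}$ with $f(K+1)-1\geqslant N_{0}$; then $S_{f(K+1)-1}>L+1$ and $S_{f(K+1)-1}=\sum_{k=1}^{K}b_{k}<L+1$, a contradiction. Hence no convergent bracketing of $\sum a_{\tau(n)}$ can exist.

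I do not anticipate a real obstacle here: the only thing to be careful about is the constructive reading of \emph{diverges to infinity}, which we take in the standard sense that for every $M>0$ one can compute $N$ with $S_{n}>M$ for all $n\geqslant N$. With that definition, the subsequence $(S_{f(K+1)-1})$ is manifestly unbounded above, which is incompatible with its having a real limit, and no choice principle or appeal to $\BDN$ is needed.
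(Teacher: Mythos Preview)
Your argument is correct and is essentially the same as the paper's: both observe that the partial sums of a bracketing are the subsequence $\bigl(S_{f(K+1)-1}\bigr)_{K\geqslant1}$ of the full partial sums, and derive a contradiction by choosing an index beyond the thresholds coming from divergence to infinity and from convergence of the bracketing. The only cosmetic difference is the order in which the two thresholds are selected.
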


\begin{proof}
Suppose there exists a bracketing $\left(  f,\mathbf{b}\right)  $ of $\sum
a_{\tau(n)}$ that converges to a sum $s$. Compute $N>1$ such that%
\begin{equation}
\sum_{n=1}^{\nu}a_{\tau(n)}>s+1\ \ \ \left(  \nu\geqslant N\right)  \text{.}
\label{labc1}%
\end{equation}
There exists $N_{1}>N$ such that
\[
\left\vert \sum_{i=1}^{N_{1}}\sum_{n=f(i)}^{f(i+1)-1}a_{\tau(n)}-s\right\vert
<1
\]
and therefore%
\[
\left\vert \sum_{n=1}^{f(N_{1}+1)-1}a_{\tau(n)}\right\vert <s+1.
\]
Since $f(N_{1}+1)>N$, this contradicts (\ref{labc1}).
\end{proof}

\begin{lemma}
\label{101111a}Let $\sum a_{n}$ be a weak-permutably convergent series of real
numbers, and $\sigma$ a permutation of $\mathbf{N}^{+}$. Then it is impossible
that $\sum\left\vert a_{\sigma(n)}\right\vert $ diverge.
\end{lemma}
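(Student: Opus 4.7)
The plan is a proof by contradiction. Suppose $\sum\left\vert a_{\sigma(n)}\right\vert$ diverges; I aim to manufacture a rearrangement of $\sum a_{n}$ whose partial sums march off to $+\infty$, at which point the existence of a convergent bracketing guaranteed by weak-permutable convergence will collide head-on with Lemma \ref{060411c}.

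The first step is to transfer the divergence of $\sum\left\vert a_{\sigma(n)}\right\vert$ to divergence of $\sum\left\vert a_{n}\right\vert$. Since all terms are non-negative, this is the elementary comparison-of-partial-sums argument already displayed at the end of the proof of Lemma \ref{060411b}: given $C>0$, pick $N$ with $\sum_{n=1}^{N}\left\vert a_{\sigma(n)}\right\vert >C$, then choose $M$ with $\left\{\sigma(1),\ldots,\sigma(N)\right\}\subseteq\left\{1,\ldots,M\right\}$, so that $\sum_{n=1}^{M}\left\vert a_{n}\right\vert \geqslant\sum_{n=1}^{N}\left\vert a_{\sigma(n)}\right\vert >C$. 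Combined with the convergence of $\sum a_{n}$ (part of the definition of weak-permutable convergence), this exhibits $\sum a_{n}$ as conditionally convergent.

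I then invoke the full, extended form of \textbf{RST}$_{2}$ advertised in the introduction and established in \cite{BergB}, to produce a permutation $\tau$ of $\mathbf{N}^{+}$ such that $\sum a_{\tau(n)}$ diverges to $+\infty$. Weak-permutable convergence of $\sum a_{n}$, applied to the permutation $\tau$, now yields a convergent bracketing of $\sum a_{\tau(n)}$; but Lemma \ref{060411c} says precisely that this is impossible. The contradiction shows that the assumption $\sum\left\vert a_{\sigma(n)}\right\vert$ diverges is untenable, which is exactly what we wanted.

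The main potential obstacle is step three: the clean constructive availability of the extended \textbf{RST}$_{2}$ giving rearrangements that diverge to $\pm\infty$. The introduction treats this as routine (``it is not hard to extend\ldots''), so modulo this invocation, the only real work is the non-negative-terms rearrangement argument in step one, with everything else being a careful threading together of Proposition \ref{060411d} and Lemma \ref{060411c}.
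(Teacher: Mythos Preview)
Your argument is correct and follows the same route as the paper: assume divergence, apply the extended \textbf{RST}$_{2}$ to obtain a permutation $\tau$ with $\sum a_{\tau(n)}\to\infty$, then invoke weak-permutable convergence and Lemma~\ref{060411c} for the contradiction. The only difference is that you make explicit the transfer of divergence from $\sum|a_{\sigma(n)}|$ to $\sum|a_{n}|$ (needed so that the extended \textbf{RST}$_{2}$ applies to $\sum a_{n}$), whereas the paper leaves this step implicit; also, Proposition~\ref{060411d} plays no role here, so your final remark slightly overstates the ingredients.
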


\begin{proof}
Suppose that $\sum\left\vert a_{\sigma(n)}\right\vert $ does diverge. Then, by
the full, extended version of \textbf{RST}$_{2}$, there is a permutation
$\tau$ of $\mathbf{N}^{+}$ such that $\sum a_{\tau(n)}$ diverges to infinity.
Since $\sum a_{n}$ is weak-permutably convergent, there exists a bracketing of
$\sum a_{\tau(n)}$ that converges. This is impossible, in view of Lemma
\ref{060411c}.
\end{proof}

\bigskip

Arguing with classical logic, we see that if $\sum a_{n}$ is weak-permutably
convergent, then, by Lemma \ref{101111a}, $\sum\left\vert a_{n}\right\vert $
must converge; whence $\sum a_{n}$ is permutably convergent, by \textbf{RST}%
$_{1}$.

Returning to intuitionistic logic, we have reached the \textbf{proof of
Proposition \ref{060411d}}:%

\bigskip

\begin{proof}
Suppose that there exists a bracketing of $\sum a_{\sigma(n)}$ that converges
to a sum distinct from $s$. Then, by Lemma \ref{060411b}, $\sum\left\vert
a_{n}\right\vert $ diverges. Lemma \ref{101111a} shows that this is
impossible. It follows from the tightness of the inequality on $\mathbf{R}$
that every convergent bracketing of $\sum a_{\sigma(n)}$ converges to $s$.
\end{proof}

\bigskip

Since permutable convergence implies convergence and is a special case of
weak-permutable convergence, we also have:

\begin{corollary}
\label{070411a}Let $\sum a_{n}$ be a permutably convergent series of real
numbers, and let $\sigma$ be a permutation of $\mathbf{N}$. Then $\sum
a_{\sigma(n)}=\sum a_{n}$. \label{ere 111111}
\end{corollary}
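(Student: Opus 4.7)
The plan is to derive the corollary directly from Proposition~\ref{060411d}, using the observation already made in the text that permutable convergence is a special case of weak-permutable convergence.

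First I would fix a permutably convergent series $\sum a_n$ with sum $s$ and an arbitrary permutation $\sigma$ of $\mathbf{N}^{+}$. Since every permutation of $\sum a_n$ converges in $\mathbf{R}$, the series $\sum a_n$ is in particular convergent, and for each permutation we automatically have a convergent bracketing, namely the \emph{trivial} bracketing given by $f(k)\equiv k$, for which $b_k = a_{\sigma(k)}$. Hence $\sum a_n$ satisfies the definition of weak-permutable convergence.

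Next I would apply Proposition~\ref{060411d} to $\sum a_n$ and the permutation $\sigma$: every convergent bracketing of $\sum a_{\sigma(n)}$ has sum $s$. Applying this to the trivial bracketing $f(k)\equiv k$ described above yields
\[
\sum_{n=1}^{\infty} a_{\sigma(n)} \;=\; \sum_{k=1}^{\infty} b_k \;=\; s \;=\; \sum_{n=1}^{\infty} a_n,
\]
which is the desired equality.

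There is essentially no obstacle here, since the proposition already does all the work; the only thing to notice is that the identity map on $\mathbf{N}^{+}$ furnishes a legitimate bracketing in the sense of the definition at the start of Section~2 (it is strictly increasing with $f(1)=1$), and that this trivial bracketing of $\sum a_{\sigma(n)}$ converges precisely because permutable convergence guarantees that $\sum a_{\sigma(n)}$ itself converges.
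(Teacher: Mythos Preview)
Your proposal is correct and follows essentially the same route as the paper, which simply remarks that permutable convergence implies convergence and is a special case of weak-permutable convergence, then invokes Proposition~\ref{060411d}. You have merely made explicit the step the paper leaves implicit: that the identity map $f(k)=k$ is a legitimate bracketing of $\sum a_{\sigma(n)}$, convergent by hypothesis, to which the proposition applies.
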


\section{\BDN and permutable convergence}

A subset $S$ of $\mathbf{N}^{+}$ is said to be\emph{ \textbf{pseudobounded}
}if for each sequence $\left(  s_{n}\right)  _{n\geqslant1}$ in $S$, there
exists $N$ such that $s_{n}/n<1$ for all $n\geqslant N$---or, equivalently, if
$s_{n}/n\rightarrow0$ as $n\rightarrow\infty$. Every bounded subset of
$\mathbf{N}^{+}$ is pseudobounded; the converse holds classically,
intuitionistically, and in recursive constructive mathematics, but Lietz
\cite{Lietz} and Lubarsky \cite{Lub} have produced models of \textbf{BISH} in
which it fails to hold for inhabited, countable, pseudobounded sets. Thus the principle

\begin{description}
\item[\BDN] \emph{Every inhabited, countable,
pseudobounded subset of} $\mathbf{N}^{+}$ \emph{is bounded}%
\footnote{\BDN was introduced by Ishihara in
\cite{IshJSL572} (see also \cite{Fred}).}
\end{description}

\noindent
is independent of \textbf{BISH. }It is a serious problem of constructive
reverse mathematics \cite{cerisy,Ishrev,Ishrev2} to determine which classical
theorems are equivalent to \textbf{BISH} + \BDN. For
example, it is known that the full form of Banach's inverse mapping theorem in
functional analysis is equivalent, over \textbf{BISH}, to \textbf{BD}%
-$\mathbf{N}$; see \cite{Ishunknown}.

This section is devoted to our version of the\emph{ \textbf{Riemann
permutability theorem}: }

\begin{theorem}
\label{2104a1}In\emph{ \textbf{BISH} + \textbf{BD}-}$\mathbf{N}$, every
permutably convergent series of real numbers is absolutely convergent.
\end{theorem}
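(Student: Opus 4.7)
The plan is to derive absolute convergence from permutable convergence by applying \BDN to a countable, inhabited, pseudobounded subset $S$ of $\mathbf{N}^+$ built from the series. Assume $\sum a_n$ is permutably convergent with sum $s$; by Corollary~\ref{070411a} every rearrangement converges to $s$, and in particular $a_n\to 0$. First I would fix a strictly increasing modulus $(N_k)$ with $|s_m - s_n| < 2^{-k}$ whenever $m, n \geq N_k$, partitioning $\mathbf{N}^+$ into blocks $B_k = (N_k, N_{k+1}]$. Inside each block the plain sum is $O(2^{-k})$, so any substantial $\sum_{n \in B_k}|a_n|$ forces cancellation between positive and negative contributions; the approximate sign decision for reals (for each $a$ and each $\epsilon > 0$, either $a > 0$ or $a < \epsilon$) would let me detect this cancellation and locally rearrange $B_k$ so that some running partial sum is pushed up by a quantifiable amount.

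The central construction is to associate to each $k$ a positive integer $\mu(k)$ measuring, via rescaling and rounding, the largest partial-sum excursion from $s$ achievable by any finite permutation of $a_1, \ldots, a_{N_k}$, and to set $S = \{\mu(k) : k \geq 1\}$. I would then argue that $S$ is pseudobounded. For if a sequence $(s_j)$ in $S$ satisfied $s_j / j \not\to 0$, passing to a subsequence one would obtain witnesses: local permutations $\pi_j$ of initial segments $\{1, \ldots, N_{k_j}\}$ whose partial sums deviate from $s$ by a bounded-below amount $\delta > 0$. Splicing these witnesses together produces a single global permutation $\sigma$ of $\mathbf{N}^+$ whose partial sums stray from $s$ repeatedly by $\delta$, contradicting the convergence of $\sum a_{\sigma(n)}$. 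With $S$ pseudobounded, \BDN supplies boundedness of $S$; unwinding the definition of $\mu$ yields that $\sum_{n=1}^N |a_n|$ is bounded in $N$, and monotone convergence of the increasing real sequence $\bigl(\sum_{n=1}^N |a_n|\bigr)$ gives $\sum|a_n| < \infty$.

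The main obstacle is the splicing argument together with the precise definition of $\mu(k)$: the integer $\mu(k)$ must be computable from the data so that its unboundedness constructively yields local witness permutations, and a procedure is needed that turns a sequence of such witnesses—each a bijection of a finite initial segment—into one honest bijection of $\mathbf{N}^+$. The standard technique is a nested splicing in which the $j$th stage fixes a longer initial segment and extends it by the witness permutation on the next segment, with identity elsewhere; keeping the $\delta$-excursion visible across stages exploits the fact that residuals contributed by earlier blocks decay geometrically, which is exactly what the $2^{-k}$-modulus delivers. Once these bookkeeping issues are discharged, the single appeal to \BDN closes the loop and absolute convergence follows.
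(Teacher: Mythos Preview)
Your outline has a genuine gap at the last step. You conclude that boundedness of $S$ yields a uniform bound on $\sum_{n=1}^N|a_n|$ and then invoke ``monotone convergence of the increasing real sequence'' to obtain $\sum|a_n|<\infty$. But the monotone convergence theorem for bounded real sequences is not available in \textbf{BISH}; a bounded increasing sequence of reals need not have a constructible limit. So even if everything before that point went through, the argument would not close. The paper avoids this trap by never trying to bound the absolute partial sums in one shot. Instead, for each rational $\varepsilon>0$ it first reduces to rational terms $a_i$, then forms the countable, downward-closed set $S_\varepsilon=\{n:\exists\, m>n,\ \sum_{i=n+1}^{m}a_i^{+}\geqslant\varepsilon\}$, proves $S_\varepsilon$ pseudobounded, and applies \BDN to obtain $N=N(\varepsilon)$ with $\sum_{i=n+1}^{m}a_i^{+}\leqslant\varepsilon$ for all $m>n\geqslant N$. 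The analogous bound for $a_i^{-}$ then gives the Cauchy condition for $\sum|a_i|$ directly, with no appeal to suprema of bounded sets.

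There is a second, related problem in your pseudoboundedness step. You argue by ``if $s_j/j\not\to 0$, pass to a subsequence with excursions bounded below by some $\delta>0$'': this is a classical contrapositive combined with a Bolzano--Weierstrass-type extraction, neither of which is constructively admissible. Pseudoboundedness demands that from an arbitrary sequence $(s_n)$ in $S$ you \emph{produce} an $N$ with $s_n\leqslant n$ for $n\geqslant N$. The paper does this positively: from $(s_n)$ it builds one explicit permutation $\sigma$ (via an auxiliary binary sequence $\lambda$ that flags when $s_{n+1}>n+1$ and waits until enough positive mass has accumulated before rearranging a block), and then the assumed convergence of $\sum a_{\sigma(i)}$ directly hands over the required $N$. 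Your splicing idea is in the right spirit, but it must be organised as a forward construction of a single permutation from the data $(s_n)$, not as an argument by contradiction.
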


\begin{proof}
Let $\sum_{i=1}^{\infty}a_{i}$ be a permutably convergent series of real
numbers. To begin with, assume that each $a_{i}$ is rational. Write%
\[
a_{n}^{+}=\max\left\{  a_{n},0\right\}  ,\ a_{n}^{-}=\max\left\{
-a_{n},0\right\}  .
\]
Given a positive rational number $\varepsilon$, define a binary mapping $\phi$
on $\mathbf{N}^{+}\times\mathbf{N}^{+}$\ such that%
\begin{align*}
\phi\left(  m,n\right)  =0  &  \Rightarrow m>n\wedge\sum_{i=n+1}^{m}a_{i}%
^{+}\geqslant\varepsilon,\\
\phi(m,n)=1  &  \Rightarrow m\leqslant n\vee\sum_{i=n+1}^{m}a_{i}%
^{+}<\varepsilon.
\end{align*}
We may assume that $\phi\left(  2,1\right)  =0$. Let%
\[
S\equiv\left\{  n:\exists_{m}\left(  \phi(m,n)=0\right)  \right\}  .
\]
Then $S$ is countable and downward closed. In order to prove that $S$ is
pseudobounded, let $\left(  s_{n}\right)  _{n\geqslant1}$ be an increasing
sequence in $S$. We may assume that $s_{1}=1$. Define a map $\kappa
:S\rightarrow\mathbf{N}^{+}$ by
\[
\kappa(n)\equiv\min\left\{  m:m>n\wedge\sum_{i=n+1}^{m}a_{i}^{+}%
\geqslant\varepsilon\right\}  .
\]
Setting $\lambda_{1}=0$, we construct inductively a binary sequence
$\mathbf{\lambda}\equiv\left(  \lambda_{n}\right)  _{n\geqslant1}$ with the
following properties:
\begin{equation}
\forall_{n}\left(  \left(  \lambda_{n}=0\wedge\lambda_{n+1}=1\right)
\Rightarrow n+1\in S\right)  \label{sigma0}%
\end{equation}%
\begin{equation}
\forall_{n}\,\exists_{m}\left(  \lambda_{n}=1\Rightarrow\lambda_{n+m}%
=0\right)  \label{nopanic}%
\end{equation}%
\begin{equation}
\forall_{n}\left(  \left(  \lambda_{n}=0\wedge\lambda_{n+1}=0\right)
\Rightarrow s_{n+1}\leqslant n+1\right)  \label{event}%
\end{equation}
Suppose that $\lambda_{1},\dots,\lambda_{n}$ have been defined such that
\begin{equation}
\forall_{k<n}\left(  \left(  \lambda_{k}=0\wedge\lambda_{k+1}=1\right)
\Rightarrow k+1\in S\right)  . \label{aux}%
\end{equation}
\smallskip In the case $\lambda_{n}=0$, if $s_{n+1}\leqslant n+1$, we set
$\lambda_{n+1}=0$; and if $s_{n+1}>n+1$, we set $\lambda_{n+1}=1$, noting that
$n+1\in S$ since $S$ is downward closed. In the case $\lambda_{n}=1$, we
define
\[
n^{\prime}\equiv\min\left\{  i\leqslant n:\forall_{j}\left(  i\leqslant
j\leqslant n\Rightarrow\lambda_{j}=1\right)  \right\}  .
\]
Then the hypothesis (\ref{aux}) ensures that $n^{\prime}\in S$. If
$\kappa(n^{\prime})=n$, then $\sum_{i=n^{\prime}+1}^{n}a_{i}^{+}%
\geqslant\varepsilon$ and we set $\lambda_{n+1}=0$; otherwise, we set
$\lambda_{n+1}=1$. This concludes the inductive construction of the sequence
$\mathbf{\lambda}$. Note that in the case $\lambda_{n}=\lambda_{n+1}=1$, this
construction will eventually give $\lambda_{n+1+m}=0$ for some $m$, since%
\[
\kappa(n^{\prime})\geqslant n+1,\sum_{i=n^{\prime}+1}^{\kappa(n^{\prime}%
)-1}a_{i}^{+}<\varepsilon\text{, and }\sum_{i=n^{\prime}+1}^{\kappa(n^{\prime
})}a_{i}^{+}\geqslant\varepsilon.
\]
Hence the sequence $\mathbf{\lambda}$ has all three properties (\ref{sigma0}%
)--(\ref{event}).

\smallskip For convenience, if $n\leqslant m$ and the following hold, we call
the interval $I=\left[  n,m\right]  $ of $\mathbf{N}^{+}$ a \emph{bad
interval}:

\begin{itemize}
\item[--] if $n>1$ then $\lambda_{n-1}=0$,

\item[--] $\lambda_{m+1}=0$, and

\item[--] $\lambda_{i}=1$ for all $i\in I$.
\end{itemize}

\noindent
Define a permutation $\sigma$ of $\mathbf{N}^{+}$ as follows. If $\lambda
_{n}=0$, then $\sigma(n)\equiv n$. If $\left[  n,m\right]  $ is a bad
interval, then the construction of the sequence $\mathbf{\lambda}$ ensures
that $\kappa(n)=m$, so $\sum_{i=n+1}^{m}a_{i}^{+}\geqslant\varepsilon$. Let
$\sigma$ map an initial segment $\left[  n,n+k-1\right]  $ of $\left[
n,m\right]  $ onto%
\[
\left\{  i:n\leqslant i\leqslant m\wedge a_{i}^{+}>0\right\}  ,
\]
and map the remaining elements of $\left[  n,m\right]  $ onto%
\[
\left\{  i:n\leqslant i\leqslant m\wedge a_{i}^{+}=0\right\}  .
\]
Note that for all $n\geqslant1$,%
\begin{equation}
\left(  \lambda_{n-1}=0\wedge\lambda_{n}=1\right)  \Rightarrow\exists
_{j,k}\left(  n\leqslant j<k\wedge\sum_{i=j+1}^{k}a_{\sigma(i)}\geqslant
\varepsilon\right)  . \label{sig1}%
\end{equation}
Since $\sum_{i=1}^{\infty}a_{\sigma(i)}$ is convergent, there exists $J$ such
that $\sum_{i=j+1}^{k}a_{\sigma(i)}<\varepsilon$ whenever $J\leqslant j<k$. In
view of (\ref{nopanic}), we can assume that $\lambda_{J}=0$. If $n\geqslant J$
and $\lambda_{J}=1$, then there exists $\nu$ such that $J\leqslant\nu<n$,
$\lambda_{\nu}=0$, and $\lambda_{\nu+1}=1$; whence there exist $j,k$ such that
$J\leqslant\nu\leqslant j<k$ and $\sum_{i=j+1}^{k}a_{\sigma(i)}\geqslant
\varepsilon$, a contradiction. Thus $\lambda_{n}=0$ for all $n\geqslant J$,
and therefore, by (\ref{event}), $s_{n}\leqslant n$ for all $n>J$. This
concludes the proof that $S$ is pseudobounded.

Applying \BDN, we obtain a positive integer $N$ such that
$n<N$ for all $n\in S$. If $m>n\geqslant N$ and $\sum_{i=n+1}^{m}a_{i}%
^{+}>\varepsilon$, then $\phi\left(  m,n\right)  \neq1$, so $\phi(m,n)=0$ and
therefore $n\in S$, a contradiction. Hence $\sum_{i=n+1}^{m}a_{i}^{+}%
\leqslant\varepsilon$ whenever $m>n\geqslant N$. Likewise, there exists
$N^{\prime}$ such that $\sum_{i=n+1}^{m}a_{i}^{-}\leqslant\varepsilon$
whenever $m>n\geqslant N^{\prime}$. Thus if $m>n\geqslant\max\left\{
N,N^{\prime}\right\}  $, then%
\[
\sum_{i=n+1}^{m}\left\vert a_{i}\right\vert =\sum_{i=n+1}^{m}a_{i}^{+}%
+\sum_{i=n+1}^{m}a_{i}^{-}\leqslant2\varepsilon.
\]
Since $\varepsilon>0$ is arbitrary, we conclude that the partial sums of the
series $\sum\left\vert a_{n}\right\vert $ form a Cauchy sequence, and hence
that the series converges.

It remains to remove the restriction that the terms $a_{i}$ be rational. In
the general case, for each $i$ pick $b_{i}$ such that $a_{i}+b_{i}$ is
rational and $0<b_{i}<2^{-i}$. Note that the series $\sum_{i=1}^{\infty}b_{i}$
converges absolutely and so, by \textbf{RST}$_{1}$, is permutably convergent.
Hence $\sum_{i=1}^{\infty}(a_{i}+b_{i})$ is permutably convergent. By the
first part of the proof, $\sum_{i=1}^{\infty}\left\vert a_{i}+b_{i}\right\vert
$ is convergent, as therefore is $\sum_{i=1}^{\infty}\left\vert a_{i}%
\right\vert $.
\end{proof}

\section{Weak-permutable convergence and \BDN}

Diener and Lubarsky \cite{LubDien} have recently constructed topological
models showing that the absolute convergence of every permutably convergent
series in $\mathbf{R}$ neither implies \BDN nor is
provable within the Aczel-Rathjen set-theoretic formulation of \textbf{BISH
}\cite{Aczel}, and may therefore be of constructive reverse-mathematical
significance in its own right. Their models lead us to ask: is there a variant
of the Riemann permutability theorem that is \emph{classically equivalent} to
the original form and that implies \BDN? Since
weak-permutable and permutable convergence are classically equivalent, the
main result of this section provides an affirmative answer:

\begin{theorem}
\label{060411f}The statement

\begin{description}
\item \emph{(*) \ \ }Every weak-permutably convergent series in $\mathbf{R}$
is absolutely convergent
\end{description}

\noindent
implies \BDN.
\end{theorem}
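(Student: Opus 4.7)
The plan is to derive \BDN\ from (*). Let $S\subseteq\mathbf{N}^{+}$ be inhabited, countable, and pseudobounded; the aim is to show $S$ is bounded. Fix an enumeration $s_{1},s_{2},\ldots$ of $S$, replace it by $t_{n}\equiv\max\{s_{1},\ldots,s_{n}\}\in S$, and rename $t_{n}$ to $s_{n}$; then $(s_{n})$ is nondecreasing, bounded below by $1$, and, inheriting pseudoboundedness from $S$, satisfies $s_{n}/n\to 0$.

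The approach is to manufacture, effectively from $(s_{n})$, a real series $\sum_{n=1}^{\infty}a_{n}$ arranged as a concatenation of blocks $B_{1},B_{2},\ldots$ whose lengths depend on $(s_{n})$ and whose entries are small rationals of carefully chosen signs, in such a way that (i) $\sum a_{n}$ converges, (ii) every rearrangement $\sum a_{\sigma(n)}$ admits a convergent bracketing, and (iii) $\sum|a_{n}|<\infty$ yields an explicit bound for the nondecreasing sequence $(s_{n})$. Given (i)--(iii), (*) applied via (i) and (ii) forces $\sum|a_{n}|<\infty$, whence (iii) bounds $S$. Within block $B_{k}$ the entries will have magnitudes of order $1/k$ (or $1/k^{2}$), with signs arranged so that the block sum is either $0$ or the $k$-th term of an absolutely convergent auxiliary series and so that the intra-block partial sums are of order $s_{k}/k$; pseudoboundedness then delivers (i). Property (iii) is secured by calibrating the block lengths and weights so that the contribution of $B_{k}$ to $\sum|a_{n}|$ is a prescribed positive function of $s_{k}$ whose finiteness, combined with monotonicity of $(s_{n})$, compels the sequence to stabilise.

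The central difficulty is (ii). Given a permutation $\sigma$ of $\mathbf{N}^{+}$, I would define the bracketing function $f$ inductively by taking $f(m+1)-1$ to be the least index at which $\sigma$ has enumerated every member of $B_{1}\cup\cdots\cup B_{m}$; surjectivity of $\sigma$ makes each such index exist, and $f$ is automatically strictly increasing. The $m$-th bracket sum $b_{m}$ then decomposes as the partial sum of $\sum a_{n}$ through block $B_{m}$ (controlled by (i)) plus a tail arising from entries of later blocks that $\sigma$ has already visited. Proving these tails vanish as $m\to\infty$---constructively and uniformly in $\sigma$---is the main obstacle, and the step where the hypothesis on $S$ genuinely enters: from $\sigma$ one must extract a sequence in $S$ that tracks the ``reach'' of $\sigma$ past each block boundary, and then invoke the pseudoboundedness of $S$ on that sequence to force the tail contribution to zero. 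Once (ii) is established, (*) produces $\sum|a_{n}|<\infty$, and (iii) converts this to a bound on $S$, completing the deduction of \BDN.
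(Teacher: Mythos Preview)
Your high-level strategy matches the paper's: both reduce the theorem to a construction of the shape ``given an inhabited, countable, pseudobounded $S$, build a weak-permutably convergent series whose absolute convergence forces $S$ to be bounded'' (this is the content of Lemma~\ref{2309a1}). What you have written, however, is a plan for that construction rather than a proof, and the plan for the crucial step~(ii) contains a real gap.

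The problem is in your last paragraph. You want to bracket $\sum a_{\sigma(n)}$ at the indices where $\sigma$ has just exhausted $B_{1}\cup\cdots\cup B_{m}$, and then control the ``tail'' of later-block terms by applying the pseudoboundedness of $S$ to a sequence measuring how far $\sigma$ has reached past block~$m$. But pseudoboundedness is a hypothesis about sequences \emph{in}~$S$; the reach of $\sigma$, or the count of surplus terms, is a quantity manufactured from $\sigma$ and has no evident reason to lie in $S$, so the appeal to pseudoboundedness has no purchase. The paper avoids this difficulty by a different mechanism: it defines a binary sequence $(\lambda_{k})$ with $\lambda_{k}=1$ exactly when $s_{2^{k+1}}>s_{2^{k}}$, and sets $a_{n}=\lambda_{k}/(n+1)$ on the $k$-th dyadic block, so that each block is either a chunk of the harmonic series or \emph{identically zero}. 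Pseudoboundedness---applied to genuine $S$-valued sequences such as $(s_{2^{j_{k+1}}})_{k\geqslant 1}$---then yields, for any $\sigma$, infinitely many indices at which an entire run of consecutive blocks vanishes; bracketing $\sum a_{\sigma(n)}$ at those indices makes the tail exactly $0$ rather than merely small. The same on/off structure also makes (iii) immediate: if $\sum a_{n}<\infty$ then $\lambda_{k}=0$ eventually (each ``on'' block contributes more than $\tfrac{1}{2}$), so $(s_{n})$ stabilises. Your scheme, in which every block carries nonzero entries of size $\sim 1/k$, has no such zero-gap device, and your outline offers no substitute for it.
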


The hard part of the proof is isolated in the complicated construction in the
following lemma.

\begin{lemma}
\label{2309a1}Let $S\equiv\left\{  s_{1},s_{2},\ldots\right\}  $ be an
inhabited, countable, pseudobounded subset of $\mathbf{N}$. Then there exists
a sequence $\left(  a_{n}\right)  _{n\geqslant1}$ of nonnegative rational
numbers with the following properties.

\begin{itemize}
\item[\emph{(i)}] $\sum\left(  -1\right)  ^{n}a_{n}$ is convergent and
weak-permutably convergent.

\item[\emph{(ii)}] If $\sum a_{n}$ converges, then $S$ is bounded.
\end{itemize}
\end{lemma}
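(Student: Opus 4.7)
My plan is to build $(a_n)$ from the enumeration $(s_k)$ of $S$ by reserving disjoint consecutive blocks $B_1,B_2,\dots$ of $\mathbf{N}^{+}$ and activating the $k$th block exactly when $s_k$ strictly exceeds $\max(s_1,\dots,s_{k-1})$. Concretely, let each $B_k$ have even length $L_k$ (with $L_k$ growing rapidly, say $L_k=2^k$) and begin at an odd integer of $\mathbf{N}^{+}$, and set $a_n=1/L_k$ for $n\in B_k$ when $s_k$ realises a new maximum, $a_n=0$ otherwise. The maximum-test is decidable, so $(a_n)$ is a computable sequence of nonnegative rationals; each activation contributes unsigned mass exactly $1$ to $\sum a_n$ and zero signed mass to $\sum(-1)^n a_n$, since the even-length, odd-starting block gives exact sign cancellation among the constant-magnitude terms.

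Assertion (ii) then follows at once: if $\sum a_n$ converges, only finitely many activations occur, so $\max(s_1,\dots,s_n)$ is eventually constant and $S$ is bounded by that eventual constant. For the bare convergence half of (i), the partial sums of $\sum(-1)^n a_n$ return to their pre-block value at each block boundary and excurse by at most $1/L_k\to 0$ inside the $k$th activated block, so the series converges (to $0$).

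The substantive task is weak-permutable convergence: given any permutation $\sigma$ of $\mathbf{N}^{+}$, produce a convergent bracketing of $\sum(-1)^{\sigma(n)}a_{\sigma(n)}$. My strategy is a ``pulse-completion'' bracketing in which I grow $f(k)$ so that $\sigma(\{1,\dots,f(k{+}1){-}1\})$ becomes a finite union of fully covered activated blocks (each contributing $0$) plus inactive positions. The principal obstacle is that the natural closure procedure---given $f(k)$, repeatedly adjoin $\sigma^{-1}(B_j)$ whenever $B_j$ is partially covered---need not terminate for adversarial $\sigma$, since completing one partial block can expose another. To handle this I plan to combine the rapid growth of $L_k$ (so residual partial blocks contribute at most a geometric-type leakage, bounded by $1/L_j$ when the touched subset of $B_j$ is order-contiguous and in the worst case by $1$) with the pseudoboundedness of $S$ (which limits how many activated blocks can be scattered by $\sigma$ through any initial segment). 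The technical heart of the argument will consist in balancing the growth of $L_k$ against the growth of $f(k)$, invoking pseudoboundedness on sequences built from $\sigma^{-1}$, so that the bracket sums form a convergent series.
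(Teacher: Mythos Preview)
Your construction is a reasonable variant of the paper's, and your arguments for (ii) and for the convergence of $\sum(-1)^n a_n$ are fine. The gap is in the weak-permutable convergence step.

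Your ``pulse-completion'' scheme, as you yourself note, need not terminate, and your fallback of accepting partial blocks with a ``worst case'' leakage of order $1$ (in fact $1/2$: take all the even-indexed positions of an activated $B_j$) cannot be summed, since a permutation can scatter arbitrarily many activated blocks across any initial segment. The assertion that ``pseudoboundedness of $S$ limits how many activated blocks can be scattered by $\sigma$ through any initial segment'' is not what pseudoboundedness says: the number of activations among $B_1,\dots,B_m$ equals the number of records in $s_1,\dots,s_m$, which is bounded by $\max_{i\le m}s_i$, and that is exactly the quantity you \emph{cannot} control without already knowing $S$ is bounded. So balancing $L_k$ against $f(k)$ with a per-block leakage estimate will not close the argument.

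The mechanism the paper uses is different and is the idea you are missing. First fix, for the given $\sigma$, ``checkpoints'' $n_1<n_2<\cdots$ and block-indices $j_1<j_2<\cdots$ with
\[
B_1\cup\cdots\cup B_{j_k}\ \subset\ \sigma(\{1,\dots,n_k-1\})\ \subset\ B_1\cup\cdots\cup B_{j_{k+1}}.
\]
Now apply pseudoboundedness not to the raw enumeration but to the sequence $\bigl(M_{j_{k+1}}\bigr)_{k\ge1}$, where $M_m\equiv\max(s_1,\dots,s_m)\in S$: there is $K$ with $M_{j_{k+1}}<k$ for all $k\ge K$. If every window $\{j_k,\dots,j_{k+1}-1\}$ with $k\le K$ contained an activated block, then $M$ would increase in each window and we would get $M_{j_{K+1}}\ge K$, a contradiction. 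Hence some window $[j_{k_1},j_{k_1+1})$ has \emph{no} activated block, i.e.\ $a_n=0$ for all $n$ in $B_{j_{k_1}}\cup\cdots\cup B_{j_{k_1+1}-1}$. Iterating produces $k_1<k_2<\cdots$ with this property, and at each such $k_i$ the sandwich collapses:
\[
\{a_{\sigma(n)}:n<n_{k_i}\}=\{a_n:n\in B_1\cup\cdots\cup B_{j_{k_i}}\}.
\]
Bracketing $\sum(-1)^{\sigma(n)}a_{\sigma(n)}$ at the $n_{k_i}$ therefore gives bracket sums equal to honest tail-sums of the original alternating series, and these tend to $0$. In short: do not try to complete partial blocks; instead use pseudoboundedness (applied to a $\sigma$-dependent subsequence of the running maxima) to locate infinitely many \emph{all-zero} buffer zones where the permuted partial sum provably matches an unpermuted one. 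Your construction admits exactly this argument, but your proposal does not contain it.
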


\begin{proof}%
\noindent
To perform this construction, we first replace each $s_{n}$ by $\max\left\{
s_{k}:k\leqslant n\right\}  $, thereby obtaining $s_{1}\leqslant
s_{2}\leqslant\cdots$. Now construct a binary sequence $\left(  \lambda
_{k}\right)  _{k\geqslant1}$ such that%
\begin{align*}
\lambda_{k}=0  &  \Rightarrow s_{2^{k+1}}=s_{2^{k}},\\
\lambda_{k}=1  &  \Rightarrow s_{2^{k+1}}>s_{2^{k}}.
\end{align*}
For $2^{k}+1\leqslant n+1<2^{k+1}$, set $a_{n}=\lambda_{k}/\left(  n+1\right)
$. Note that if $\lambda_{k}=1$, then $\sum_{n=2^{k}+1}^{2^{k+1}}a_{n}%
>\frac{1}{2}$. In order to show that $\sum_{n=1}^{\infty}\left(  -1\right)
^{n}a_{n}$ converges in $\mathbf{R}$, first observe that if $\lambda_{k}=1$
and $2^{k}<m_{1}\leqslant m_{2}\leqslant2^{k+1}$, then%
\begin{equation}
\left\vert \sum_{n=m_{1}}^{m_{2}}\left(  -1\right)  ^{n}a_{n}\right\vert
=\left\vert \sum_{n=m_{1}}^{m_{2}}\frac{\left(  -1\right)  ^{n}}%
{n+1}\right\vert <\frac{1}{2^{k}}. \label{0903a}%
\end{equation}
If $j,k,m_{1},m_{2}$ are positive integers with $2^{k}<m_{1}\leqslant
2^{k+1}\leqslant2^{j}<m_{2}\leqslant2^{j+1}$, then%
\begin{align*}
&  \left\vert \sum_{n=m_{1}}^{m_{2}}\left(  -1\right)  ^{n}a_{n}\right\vert \\
&  \leqslant\left\vert \sum_{n=m_{1}}^{2^{k+1}}\left(  -1\right)  ^{n}%
a_{n}\right\vert +\sum_{\substack{k<\nu<j,\\\lambda_{\nu}=1}}\left\vert
\sum_{n=2^{\nu}+1}^{2^{\nu+1}}\left(  -1\right)  ^{n}a_{n}\right\vert
+\left\vert \sum_{n=2^{j}+1}^{m_{2}}\left(  -1\right)  ^{n}a_{n}\right\vert \\
&  \leqslant\frac{1}{2^{k}}+\sum_{\substack{k<\nu<j,\\\lambda_{\nu}=1}%
}\frac{1}{2^{\nu}}+\frac{1}{2^{j}}\\
&  \leqslant\sum_{n=k}^{\infty}\frac{1}{2^{n}}=\frac{1}{2^{k-1}}.
\end{align*}
Hence the partial sums of $\sum_{n=1}^{\infty}\left(  -1\right)  ^{n}a_{n}$
form a Cauchy sequence, and so the series converges to a sum $s\in\mathbf{R}$.

Consider any permutation $\sigma$ of $\mathbf{N}^{+}$. In order to show that
$\sum_{n=1}^{\infty}\left(  -1\right)  ^{\sigma(n)}a_{\sigma(n)}$ converges,
we construct strictly increasing sequences $\left(  j_{k}\right)
_{k\geqslant1}$ and $\left(  n_{k}\right)  _{k\geqslant1}$ of positive
integers such that for each $k$,

\begin{itemize}
\item[(a)] $2^{j_{k}}<n_{k}<2^{j_{k+1}}$,

\item[(b)] $\left\{  n:n+1<2^{j_{k}}\right\}  \subset\left\{  \sigma
(n):n+1<n_{k}\right\}  \subset\left\{  1,2,\ldots2^{j_{k+1}}-1\right\}  $, and

\item[(c)] $\left\vert \sum_{n=2^{j_{k}}}^{i}\left(  -1\right)  ^{n}%
a_{n}\right\vert <2^{-k+1}$ for all $k\geqslant1$ and $i\geqslant2^{j_{k}}$.
\end{itemize}

\noindent
Setting $j_{1}=2$, pick $n_{1}>4$ such that%
\[
\left\{  1,2\right\}  \subset\left\{  \sigma(n):n+1<n_{1}\right\}  .
\]
Then pick $j_{2}>j_{1}$ such that $n_{1}<2^{j_{2}}$,%
\[
\left\{  \sigma(n):n+1<n_{1}\right\}  \subset\left\{  n:n+1<2^{j_{2}}\right\}
,
\]
and $\left\vert \sum_{n=2^{j_{2}}}^{i}\left(  -1\right)  ^{n}a_{n}\right\vert
\,<2^{-1}$ for all $i\geqslant2^{j_{2}}$. Next pick, in turn, $n_{2}>2^{j_{2}%
}$ and $j_{3}>j_{2}$ such that%
\[
\left\{  n:n+1<2^{j_{2}}\right\}  \subset\left\{  \sigma(n):n+1<n_{2}\right\}
\subset\left\{  n:n+1<2^{j_{3}}\right\}
\]
and $\left\vert \sum_{n=2^{j_{3}}}^{i}\left(  -1\right)  ^{n}a_{n}\right\vert
<2^{-2}$ for all $i\geqslant2^{j_{3}}$. Carrying on in this way, we complete
the construction of our sequences $\left(  j_{k}\right)  _{k\geqslant
1},\left(  n_{k}\right)  _{k\geqslant1}$ with properties (a)--(c).

Now consider the sequence $\left(  s_{2^{j_{k}+1}}\right)  _{k\geqslant1}$.
Since $S$ is pseudobounded, there exists a positive integer $K_{1}$ such that
$s_{2^{j_{k+1}}}<k$ for all $k\geqslant K_{1}$. Suppose that for each positive
integer $k\leqslant K_{1}$, there exists $i_{k}$ such that $j_{k}\leqslant
i_{k}<j_{k+1}$ and $\lambda_{i_{k}}=1$. Then%
\[
s_{2^{i_{1}}}<s_{2^{i_{2}}}<\cdots<s_{2^{i_{K_{1}}}}<s_{2^{j_{K_{1}+1}}},
\]
so $K_{1}\leqslant s_{2^{j_{K_{1}+1}}}<K_{1}$, a contradiction. Hence there
exists $k_{1}\leqslant K_{1}$ such that for each $i$ with $j_{k_{1}}\leqslant
i<j_{k_{1}+1}$, we have $\lambda_{i}=0,$ and therefore $a_{n}=0$ whenever
$2^{i}\leqslant n+1<2^{i+1}$. Thus $a_{n}=0$ whenever $2^{j_{k_{1}}}\leqslant
n+1<2^{j_{k_{1}+1}}$. It follows that%
\begin{align*}
\left\{  a_{n}:n+1<2^{j_{k_{1}}}\right\}   &  \subset\left\{  a_{\sigma
(n)}:n+1<n_{k_{1}}\right\} \\
&  \subset\left\{  a_{n}:n+1<2^{j_{k_{1}+1}}\right\} \\
&  =\left\{  a_{n}:n+1<2^{j_{k_{1}}}\right\}  \cup\left\{  a_{n}:2^{j_{k_{1}}%
}\leqslant n+1<2^{j_{k_{1}+1}}\right\} \\
&  =\left\{  a_{n}:n+1<2^{j_{k_{1}}}\right\}  \cup\left\{  0\right\}  .
\end{align*}
Without loss of generality, we may assume that $a_{1}=0$. Then%
\[
\left\{  a_{n}:n+1<2^{j_{k_{1}}}\right\}  =\left\{  a_{\sigma(n)}%
:n+1<n_{k_{1}}\right\}  .
\]

Next consider the sequence $\left(  s_{2^{j_{k_{1}+k+1}}}\right)
_{k\geqslant1}$. Since $S$ is pseudobounded, there exists a positive integer
$K_{2}$ such that $s_{2^{j_{k_{1}+k+1}}}<k$ for all $k\geqslant K_{2}$.
Suppose that for each positive integer $k\leqslant K_{2}$, there exists
$i_{k}$ such that $j_{k_{1}+k}\leqslant i_{k}<j_{k_{1}+k+1}$ and
$\lambda_{i_{k}}=1$. Then%
\[
s_{2^{i_{1}}}<s_{2^{i_{2}}}<\cdots<s_{2^{i_{K_{2}}}}<s_{2^{j_{k_{1}+K_{2}+1}}%
},
\]
so $K_{2}\leqslant s_{2^{j_{k_{1}+K_{2}+1}}}<K_{2}$, which is absurd. Hence
there exists $\kappa\leqslant K_{2}$ such that for each $i$ with
$j_{k_{1}+\kappa}\leqslant i<j_{k_{1}+\kappa+1}$, we have $\lambda_{i}=0$, and
therefore $a_{n}=0$ whenever $2^{i}\leqslant n+1<2^{i+1}$. Setting
$k_{2}\equiv k_{1}+\kappa$, we have $a_{n}=0$ for all $n$ with $2^{j_{k_{2}}%
}\leqslant n+1<2^{j_{k_{2}+1}}$. Hence%
\begin{align*}
\left\{  a_{n}:n+1<2^{j_{k_{2}}}\right\}   &  \subset\left\{  a_{\sigma
(n)}:n+1<n_{k_{2}}\right\} \\
&  \subset\left\{  a_{n}:n+1<2^{j_{k_{2}+1}}\right\} \\
&  =\left\{  a_{n}:n+1<2^{j_{k_{2}}}\right\}  \cup\left\{  a_{n}:2^{j_{k_{2}}%
}\leqslant n+1<2^{j_{k_{2}+1}}\right\} \\
&  =\left\{  a_{n}:n+1<2^{j_{k_{2}}}\right\}  \cup\left\{  0\right\}  .
\end{align*}
Thus, since $a_{1}=0$,%
\[
\left\{  a_{n}:n+1<2^{j_{k_{2}}}\right\}  =\left\{  a_{\sigma(n)}%
:n+1<n_{k_{2}}\right\}  .
\]
Carrying on in this way, we construct positive integers $k_{1}<k_{2}%
<k_{3}<\cdots$ such that for each $i$,%
\[
\left\{  a_{n}:n+1<2^{j_{k_{i}}}\right\}  =\left\{  a_{\sigma(n)}%
:n+1<n_{k_{i}}\right\}  .
\]
Since both $\sigma$ and $\sigma^{-1}$ are injective, it readily follows that
for each $i$,%
\[
\left\{  \sigma(n):n_{k_{i}}\leqslant n+1<n_{k_{i+1}}\right\}  =\left\{
m:2^{j_{k_{i}}}\leqslant m<2^{j_{k_{i+1}}}\right\}
\]
and therefore%
\[
\left\vert \sum_{n=n_{k_{i}}}^{n_{k_{i+1}}-1}\left(  -1\right)  ^{\sigma
(n)}a_{\sigma(n)}\right\vert =\left\vert \sum_{m=2^{j_{k_{i}}}}^{2^{j_{k_{i}%
+1}-1}}\left(  -1\right)  ^{m}a_{m}\right\vert <\frac{1}{2^{k_{i}}}.
\]
We now see that%
\[
\sum_{i=1}^{\infty}\sum_{n=n_{k_{i}}}^{n_{k_{i+1}-1}}(-1)^{\sigma(n)}%
a_{\sigma(n)}%
\]
converges, by comparison with $\sum_{i=1}^{\infty}2^{-k_{i}}$. Thus
$\sum_{n=1}^{\infty}a_{n}$ is weak-permutably convergent.

Finally, suppose that $\sum_{n=1}^{\infty}a_{n}$ converges. Then there exists
$N$ such that $\sum_{n=N+1}^{\infty}a_{n}<1/2$. It follows that $\lambda
_{n}=0$, and therefore that $s_{n}=s_{2^{N}}$, for all $n\geqslant N$; whence
$s_{n}\leqslant s_{2^{N}}$ for all $n$, and therefore $S$ is a bounded set.
\end{proof}

\bigskip

The proof of Theorem \ref{060411f} is now straightforward:%

\bigskip

\begin{proof}
Given an inhabited, countable, pseudobounded subset $S$ of $\mathbf{N}$,
construct a sequence $\left(  a_{n}\right)  _{n\geqslant1}$ of nonnegative
rational numbers with properties (i) and (ii) in Lemma \ref{2309a1}. Assuming
(*), we see that $\sum a_{n}$ converges; whence, by property (ii), $S$ is a
bounded set.
\end{proof}

\section{Concluding remarks}

We have shown that, over \textbf{BISH},

\begin{itemize}
\item[--] with \BDN, every permutably convergent series is
absolutely convergent;

\item[--] the absolute convergence of every weak-permutably convergent series
implies \BDN.
\end{itemize}

\noindent
It follows from the latter result that if weak-permutable convergence
constructively implies, and is therefore equivalent to, permutable
convergence, then the absolute convergence of every permutably convergent
series implies, and is therefore equivalent to, \BDN.
Since the topological models in \cite{LubDien} show that this is not the case,
we see that, relative to \textbf{BISH}, weak-permutable convergence is a
strictly weaker notion than permutable convergence. In fact, the
Diener-Lubarsky result shows that there is no algorithm which, applied to any
inhabited, countable, pseudobounded subset $S$ of $\mathbf{N}$ and the
corresponding weak-permutably convergent series $\sum a_{n}$ constructed in
the proof of Lemma \ref{2309a1}, proves that that series is permutably
convergent. Nevertheless, weak-permutable convergence and permutable
convergence are classically equivalent notions; the constructive distinction
between them is that the former implies, but is not implied by, \textbf{BD}%
-$\mathbf{N}$, which in turn implies, but is not implied by, the latter.%

\bigskip

\bigskip

\noindent
\textbf{Acknowledgements.} This work was supported by (i) a Marie Curie IRSES
award from the European Union, with counterpart funding from the Ministry of
Research, Science \& Technology of New Zealand, for the project
\emph{Construmath}; and (ii) a Feodor Lynen Return Fellowship for Berger, from
the Humboldt Foundation. The authors also thank the Department of Mathematics
\& Statistics at the University of Canterbury, for releasing Bridges to visit
Munich under the terms of the IRSES award.%

\bigskip

\bigskip

\bigskip

\bigskip

\noindent
\textbf{Keywords:} \ Permutation of series, constructive reverse mathematics%

\noindent
\textbf{MR Classifications (2010): }03F60, 26A03, 26E40%

\bigskip

\noindent
\textbf{Authors' addresses:}

\begin{description}
\item \emph{Berger:} Institut f\"{u}r Mathematik und Informatik,
Walther-Rathenau-Stra\ss e 47, D-17487 Greifswald, Germany

\begin{description}
\item {\small \texttt{jberger@math.lmu.de}}
\end{description}

\item \emph{Bridges:} Department of Mathematics \& Statistics, University of
Canterbury, Private Bag 4800, Christchurch 8140, New Zealand

\begin{description}
\item {\small \texttt{d.bridges@math.canterbury.ac.nz}}
\end{description}

\item \emph{Diener:} Fakult\"{a}t IV: Mathematik, Emmy-Noether-Campus, Walter-Flex-Str. 3, 57072 Siegen, Germany

\begin{description}
\item {\small \texttt{diener@math.uni-siegen.de}}
\end{description}

\item \emph{Schwichtenberg:} Mathematisches Institut der LMU, Theresienstr.
39, 80333 M\"{u}nchen, Germany
\begin{description}
\item {\small \texttt{schwicht@math.lmu.de}}{}
\end{description}
\end{description}


\begin{thebibliography}{99}                                                                                               %


\bibitem {Aczel}P. Aczel and M. Rathjen: \emph{Notes on Constructive Set
Theory}, Report No. 40, Institut Mittag-Leffler, Royal Swedish Academy of
Sciences, 2001.

\bibitem {BergB}J. Berger and D.S. Bridges: `Rearranging series
constructively', J. Univ. Comp. Sci. \textbf{15}(17), 3160--3168, 2009.

\bibitem {Bishop}E.A. Bishop, \emph{Foundations of Constructive Analysis},
McGraw-Hill, New York, 1967.

\bibitem {BB}E.A. Bishop and D.S. Bridges: \emph{Constructive Analysis},
Grundlehren der Math. Wiss. \textbf{279}, Springer Verlag, Heidelberg, 1985.

\bibitem {cerisy}D.S. Bridges: `A reverse look at Brouwer's fan theorem', in:
\emph{One Hundred Years of Intuitionism (1907--2007) }(Eds: van Atten, M.;
Boldini, P.; Bourdeau, M.; Heinzmann, G.), Publications of the Henri
Poincar\'{e} Archives, Birkh\"{a}user, Basel, 316--325, 2008.

\bibitem {BR}D.S. Bridges and F. Richman, \emph{Varieties of Constructive
Mathematics}, London Math. Soc. Lecture Notes \textbf{97}, Cambridge Univ.
Press, Cambridge, 1987.

\bibitem {BV}D.S. Bridges and L.S. V\^{\i}\c{t}\u{a}: \emph{Techniques of
Constructive Analysis}, Universitext, Springer-Verlag, Heidelberg, 2006.

\bibitem {LubDien}H. Diener and R. Lubarsky: `Principles weaker than
\textbf{BD}-$\mathbb{N}$', preprint, Florida Atlantic University, Boca Raton,
FL, 2011.

\bibitem {Ish91}H. Ishihara: `Continuity and nondiscontinuity in constructive
mathematics', J. Symb. Logic \textbf{56}(4), 1349--1354, 1991.

\bibitem {IshJSL572}H. Ishihara: `Continuity properties in metric spaces', J.
Symb. Logic \textbf{57}(2), 557--565, 1992.

\bibitem {Ishunknown}H. Ishihara: `A constructive version of Banach's inverse
mapping theorem', New Zealand J. Math.\textbf{23}, 71--75, 1994.

\bibitem {Ishrev}H. Ishihara: `Constructive reverse mathematics: compactness
properties', In: \emph{From Sets and Types to Analysis and Topology: Towards
Practicable Foundations for Constructive Mathematics }(L. Crosilla and P.M.
Schuster, eds), Oxford Logic Guides \textbf{48}, Oxford Univ. Press, 245--267, 2005.

\bibitem {Ishrev2}H. Ishihara: `Reverse mathematics in Bishop's constructive
mathematics', Philosophia Scientiae, Cahier Special \textbf{6}, 43--59, 2006.

\bibitem {Lietz}P. Lietz and T. Streicher: `Realizability models refuting
Ishihara's boundedness principle', preprint, Tech. Universit\"{a}t Darmstadt,
Germany, 2011.

\bibitem {Lub}R. Lubarsky: `On the failure of \textbf{BD}-$\mathbb{N}$',
preprint, Florida Atlantic University, Boca Raton, FL, 2010.

\bibitem {Fred}F. Richman: `Intuitionistic notions of boundedness in
$\mathbf{N}^{+}$', Math. Logic Quart. \textbf{55}(1), 31--36, 2009.

\bibitem {Riemann}G.F.B. Riemann: `Ueber die Darstellbarkeit einer Function
durch eine trigonometrische Reihe', in \emph{Gesammelte Werke}, 227--264.
Originally in: \emph{Habilitationsschrift}, 1854, Abhandlungen der
K\"{o}niglichen Gesellschaft der Wissenschaften zu G\"{o}ttingen, \textbf{13}.

\bibitem {Spitters}B. Spitters: `Constructive results on operator algebras',
J. Univ. Comp. Sci. \textbf{11}(12), 2096--2113, 2005.

\bibitem {Tro}A.S. Troelstra: \emph{Choice Sequences. A Chapter of
Intuitionistic Mathematics}, Oxford Logic Guides, Clarendon Press, Oxford, 1977.
\end{thebibliography}
\end{document}